\documentclass[11pt]{article}
\usepackage{mylatexsetting}
\def\mytitle{Stochastic Ordering under Weaker Likelihood-Ratio Shape Conditions}

\def\myabstract{

We show that the shape hypothesis on a likelihood ratio can be weakened while retaining endpoint criteria for the hazard-rate and usual stochastic orders. The endpoint reduction persists under unimodality of the likelihood ratio and under a sign-pattern condition on the likelihood ratio minus one, with at most two sign changes and a negative right tail. It also follows from a direct superlevel-set criterion involving the same expression, which is useful in particular for discontinuous likelihood ratios.

}

\title{\mytitle}
\author{Zakaria Derbazi\footnote{Correspondence address: z.derbazi@qmul.ac.uk}\\{\it\small Queen Mary University of London}}
\date{\today}

\begin{document}

\maketitle

\begin{abstract}
\myabstract
\end{abstract}

\medskip\noindent
\textbf{MSC 2020:} 60E15 (primary), 60E05 (secondary).

\medskip\noindent
\textbf{Keywords:} stochastic ordering, likelihood-ratio order, hazard-rate order, unimodality, sign-pattern criterion, endpoint criterion.


The theory of stochastic ordering provides a standard framework for comparing probability laws, with applications across reliability, queueing theory, actuarial mathematics, and applied probability~\cite{ShakedShanthikumar,MullerStoyan}. For direct comparison between two laws, the natural object is the likelihood ratio. Its monotonicity gives the likelihood-ratio order, and concavity of its logarithm gives relative log-concavity~\cite{Whitt1985,Yu2009}. The question addressed here is how far that shape hypothesis can be weakened while retaining usable endpoint criteria for the hazard-rate and usual stochastic orders.

Writing $\ell$ for the likelihood ratio, the pairwise shape hypotheses underlying these results are summarised in Figure~\ref{fig:shape-chain}.

\begin{figure}[H]
\centering
$\begin{array}{ccccc}
\text{$\ell$ log-concave}
& \Longrightarrow &
\text{$\ell$ unimodal}
& \dashrightarrow &
\begin{array}{c}
\text{$\log\ell$ has at most two roots,}\\
\text{negative beyond the largest}
\end{array} \\[1.5ex]
& \rotatebox[origin=c]{-135}{$\Longrightarrow$} & & \rotatebox[origin=c]{-45}{$\Longrightarrow$} & \\[0.5ex]
\begin{array}{c}
\text{$\ell-1$ has at most two sign changes,}\\
\text{negative on the right tail}
\end{array}
& & & &
\{\ell\ge 1\}\ \text{is an interval}
\end{array}$
\caption{Pairwise shape hypotheses on $\ell$. The dashed arrow marks a non-implication.}
\label{fig:shape-chain}
\end{figure} The first two hypotheses are classical~\cite{Whitt1985,Yu2009,BelzunceMartinezPereda2022}, while the third is the endpoint-reduction hypothesis of~\cite{YaoLouPan2023} for nonnegative laws. The present note furnishes two main results in this setting. We give a sign-pattern criterion on $\ell-1$ that delivers the same endpoint reduction on supports with finite left endpoint. Moreover, it works with the sign pattern rather than a literal root count, and therefore avoids difficulties caused by multiple zeros of $\log\ell$. Finally, we give a direct superlevel-set criterion on \(\{\ell\ge 1\}\), which dispenses with the continuity hypothesis on \(\ell\) and is convenient for likelihood ratios with a jump across the level \(1\).

Building on these observations, the paper's main contributions are:
\begin{enumerate}[label=\textup{(\roman*)}]

\item Endpoint reductions for the hazard-rate and usual stochastic orders under relative log-concavity and under unimodality of the likelihood ratio, together with the corresponding discrete and continuous endpoint tests for the likelihood-ratio order under relative log-concavity (Theorem~\ref{thm:unimodal-criterion}, Corollaries~\ref{cor:lc-endpoints} and~\ref{cor:lc-lr}).

\item An endpoint reduction on support intervals, phrased in terms of the sign pattern of \(\ell-1\), extending the result of~\cite{YaoLouPan2023} from nonnegative laws to ordered supports with finite left endpoint, together with a direct superlevel-set alternative that drops the continuity hypothesis on \(\ell\) and covers discontinuous likelihood-ratio comparisons (Lemma~\ref{lem:tail-mean-sign}, Propositions~\ref{prop:ylp-kernel} and~\ref{prop:ylp-one-sided-score}).

\end{enumerate}



\section{Preliminaries}
\label{sec:prelim}

Let \(E\) be a totally ordered support, either an integer interval or an interval of \(\R\), equipped with counting measure or Lebesgue measure according to the case. All probability measures considered below are dominated by the corresponding reference measure \(\mu\). For a probability measure  \(P\) on \(E\), write \(f_P:=\dx[]P/\dx[\mu]\), and similarly for \(Q\).

For a pair of probability measures \(P\) and \(Q\), set \(\rho:=P+Q\) and \(S:=\supp(\rho)\). The likelihood ratio is
\begin{equation}
    \label{def.ell}
    \ell(x):=\frac{\dx[]P/\dx[]\rho}{\dx[]Q/\dx[]\rho}, \qquad x\in S,
\end{equation}
with the conventions $a/0:=+\infty$ for $a>0$ and $0/0:=0$. The likelihood ratio coincides with the $\mu$-density ratio $f_P/f_Q$ wherever both $\mu$-densities are positive.

We use \(X,Y\) for random variables with values in \(E\), distributed according to the law under consideration, and \(\E\) for expectation under the law currently in scope. When the probability measure needs to be made explicit, we write \(\E_P,\E_Q\).

When \(E\) is an integer interval, we write \(\N_0:=\{0\}\cup\N\) and refer to sets of the form \([a,b]:=\{a,\dots,b\}\) or \([a,\infty):=\{a,a{+}1,\dots\}\) as intervals in \(\N_0\).  Forward differences are \(\Delta h(k):=h(k+1)-h(k)\) and \(\Delta^2 h(k):=h(k+2)-2h(k+1)+h(k)\); we call \(k\) admissible for \(\Delta^2\) when \(k,k+1,k+2\) all lie in the relevant support. We write \(\psi=\Gamma'/\Gamma\) for the digamma function.  The indicator of a set \(A\subseteq E\) is \(\one\{A\}\).

\begin{defn}[Unimodality] \label{def:unimodal}
A real-valued function \(a\) is \emph{unimodal} on an interval \(J\subseteq E\) if there exists \(x_0\in J\) such that \(a\) is nondecreasing on \(J\cap(-\infty,x_0]\) and nonincreasing on \(J\cap[x_0,\infty)\).  On an integer interval this is the usual notion of a unimodal sequence.
\end{defn}

\begin{defn}[Log-concavity]
A positive function \(a:J\to(0,\infty)\) on an interval \(J\subseteq E\) is \emph{log-concave} if \(\log a\) is concave on \(J\). On an integer interval this is equivalent to
\[
a_k^2\ge a_{k-1}a_{k+1},
\]
for every admissible \(k\in J\), or equivalently to \(\Delta^2\log a(k)\le 0\).
\end{defn}

\begin{defn}\label{def:orders}
Let \(P\) and \(Q\) be probability measures on \(E\), with survival functions \(\bar F_P(x):=P([x,\infty)\cap E)\) and $\bar F_Q$ analogously.
\begin{enumerate}[label=\textup{(\roman*)}]

\item \emph{Usual stochastic order}: \(P\st Q\) if
\(\bar F_P(x)\le\bar F_Q(x)\) for every \(x\in E\); equivalently, \(\int\varphi\,\dx[]P\le\int\varphi\,\dx[]Q\) for every bounded nondecreasing \(\varphi:E\to\R\).

\item \emph{Hazard-rate order}: writing \(h_P(x):=f_P(x)/\bar F_P(x)\) for the hazard-rate when \(\bar F_P(x)>0\), \(P\hr Q\) if the survival ratio \(\bar F_P(x)/\bar F_Q(x)\) is nonincreasing in \(x\), equivalently \(h_P(x) \ge h_Q(x)\), wherever both expressions are defined.

\item \emph{Likelihood-ratio order}: \(P\lr Q\) if the likelihood ratio \(\ell\) is nonincreasing on \(\supp(P)\cup\supp(Q)\) \cite{KlenkeMattner2010, ShakedShanthikumar}.

\item \emph{Relative log-concavity}~\cite{Whitt1985,Yu2009}: when \(\supp(P)\subseteq\supp(Q)\) and \(\supp(P)\) is an interval in \(E\), \(P\lc Q\) if \(\log \ell\) is concave on \(\supp(P)\).  In the discrete case this is equivalent to \(\Delta^2\log\ell(k)\le 0\) for every admissible  \(k\in\supp(P)\), while in the continuous case it is equivalent to \((\log\ell)''(x)\le 0\) wherever the second derivative exists.
\end{enumerate}
\end{defn}


\section{Endpoint Criteria under Weaker Shape Conditions}
\label{sec:kernel}

Throughout this section, $P$ and $Q$ are two laws whose supports form, or are contained in, an interval $J\subseteq E$, with densities $f_P, f_Q$ positive on the relevant support, and $\ell$ denotes the likelihood ratio of~\eqref{def.ell}. Each result restates the precise support hypothesis it needs (common support, support union, or support inclusion). The first step is immediate from the definitions:
\[
P\lr Q\iff \ell\text{ is nonincreasing on }J,
\qquad
P\lc Q\iff \log\ell\text{ is concave on }J.
\]
In the discrete case these become \(\Delta\log\ell(k)\le 0\) and \(\Delta^2\log\ell(k)\le 0\), while in the continuous case they become \(\partial_x\log\ell(x)\le 0\) and \(\partial_x^2\log\ell(x)\le 0\) wherever the derivatives exist.

\subsection{Unimodality and relative log-concavity}
\label{sec:comparison-kernel}
When the likelihood ratio is only known to be unimodal, the hazard-rate and usual stochastic orders still reduce to a left-endpoint test.

\begin{thm}
\label{thm:unimodal-criterion}
Let \(P,Q\) be probability measures on \(E\), and let \(\ell\) be the likelihood ratio on $S:=\supp(P)\cup\supp(Q)$ as in~\eqref{def.ell}. If \(S\) is an interval with finite left endpoint \(x_*:=\inf S\), and \(\ell\) is unimodal on \(S\), then the following assertions are equivalent:
\begin{enumerate}[label=\textup{(\roman*)}]
\item $P\st Q$.
\item $P\hr Q$.
\item $\ell(x_*+)\ge 1$, where $\ell(x_*+)$ denotes the right limit of \(\ell\) at $x_*$.
\end{enumerate}
\end{thm}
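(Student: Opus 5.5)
The proof will run around the cycle (ii)\(\Rightarrow\)(i)\(\Rightarrow\)(iii)\(\Rightarrow\)(ii). The implication (ii)\(\Rightarrow\)(i) is the standard fact that the hazard-rate order implies the usual stochastic order. If \(\bar F_P/\bar F_Q\) is nonincreasing on \(S\), then \(\bar F_P(x)/\bar F_Q(x)\le \bar F_P(x_*)/\bar F_Q(x_*)=1\), because both survival functions equal \(1\) at the left endpoint.

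For (i)\(\Rightarrow\)(iii) I will argue by contraposition. Suppose \(\ell(x_*+)<1\). Because \(\ell\) is unimodal, it is monotone on an initial segment, so the right limit exists in \([0,\infty]\). Then \(\ell<1\), i.e.\ \(f_P<f_Q\), on some \((x_*,x_*+\delta)\), or at the single atom \(x_*\) in the discrete case. Hence for small \(x\) we get \(1-\bar F_P(x)=P([x_*,x))<Q([x_*,x))=1-\bar F_Q(x)\). This gives \(\bar F_P(x)>\bar F_Q(x)\) and contradicts \(P\st Q\). In the discrete case I read \(\ell(x_*+)\) as \(\ell(x_*)\) and take \(x=x_*+1\).

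The substantive step is (iii)\(\Rightarrow\)(ii). With \(\ell(x_*+)\ge1\) and \(\ell\) unimodal, the superlevel set \(\{\ell\ge1\}\) is an initial interval \([x_*,c)\) or \([x_*,c]\), and \(\ell<1\) beyond it. The reason is that \(\ell\) first rises from a value \(\ge1\) to its mode, so it stays \(\ge 1\) there, and afterwards it is nonincreasing, so it crosses \(1\) at most once. Thus \(\ell-1\) has a single sign change, from \(+\) to \(-\). Equivalently \(f_P-f_Q\ge0\) on \([x_*,c)\) and \(\le0\) after \(c\); points with \(f_Q=0\) correspond to \(\ell=+\infty\) and do not disturb this. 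To show \(\bar F_P/\bar F_Q\) is nonincreasing, I will fix \(x<y\) in \(S\) and prove \(\bar F_P(y)\bar F_Q(x)\le \bar F_P(x)\bar F_Q(y)\). I will split into two cases.
\begin{enumerate}[label=\textup{(\alph*)}]
\item If \(x\ge c\), then \(\ell\) is nonincreasing on \([x,\infty)\cap S\), and \(\ell<1\) there. The restrictions of \(P,Q\) to \([x,\infty)\) are then likelihood-ratio ordered, and the classical implication \(\lr\Rightarrow\hr\), which integrates \(f_P(s)f_Q(t)\le f_P(t)f_Q(s)\) for \(s\le t\), gives the inequality.
\item If \(x<c\), I will instead show that the ratio \(r:=\bar F_P/\bar F_Q\) is nonincreasing on \([x_*,c]\). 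This follows from \(r'\le0\iff h_P\ge h_Q\iff \ell(x)\ge \bar F_P(x)/\bar F_Q(x)\). The tail ratio \(\bar F_P(x)/\bar F_Q(x)\) is a \(Q\)-weighted average of \(\ell\) over \([x,\infty)\). For \(x\ge c\) this average is at most \(\ell(x)\) because \(\ell\) is nonincreasing there. For \(x<c\), the average splits into the part on \([x,c)\) and the tail. On \([x,c)\), \(\ell\) is unimodal with its increasing part running into the mode. Here I expect the main obstacle: the weighted average over \([x,\infty)\) may exceed \(\ell(x)\) when \(\ell\) is still increasing.
\end{enumerate}

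For the obstacle I will argue directly on \(r\) rather than through hazards. On \([x_*,c)\) we have \(\ell\ge1\ge r\)? That fails in general, so the plan is to control \(r\) from the top instead. Since \(\bar F_P(x_*)=\bar F_Q(x_*)\) and \(f_P\ge f_Q\) on \([x_*,c)\), the differences \(\bar F_Q-\bar F_P\) increase on \([x_*,c]\). After \(c\) they decrease to \(0\), so \(\bar F_P\le\bar F_Q\) throughout and \(r\le 1\). Then I will show \(h_P\ge h_Q\) on \([x_*,c)\) from \(f_P\ge f_Q\) and \(\bar F_P\le \bar F_Q\): \(h_P=f_P/\bar F_P\ge f_Q/\bar F_Q=h_Q\). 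This avoids the averaging issue entirely. On \([c,\infty)\), case (a) applies. Combining both ranges, \(h_P\ge h_Q\) wherever both are defined, so \(P\hr Q\). In the discrete case the same argument works with \(h(k)=f(k)/\bar F(k)\) and \(\bar F(k+1)/\bar F(k)=1-h(k)\).
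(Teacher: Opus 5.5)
Your proof is correct and follows essentially the paper's argument. The single sign change of $f_P-f_Q$ gives $\bar F_P\le\bar F_Q$, which yields $h_P\ge h_Q$ on $\{\ell\ge 1\}$ (the paper's $T\le 1\le\ell$), and on $\{\ell<1\}$ the nonincreasing tail of $\ell$ finishes the job. The only difference is that you integrate the classical likelihood-ratio-to-hazard-rate inequality on the restricted laws, where the paper uses the pointwise bound $T\le\ell$ and then differentiates.

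There are two slips to fix. First, your aside that ``$\ell\ge 1\ge r$ fails in general'' is wrong: once you have shown $r\le 1$ it holds on $[x_*,c)$, and it is exactly the hazard inequality you then prove. Second, for nonincreasing $\ell$ and $s\le t$, the inequality to integrate is $f_P(s)f_Q(t)\ge f_P(t)f_Q(s)$, not $\le$.
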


\begin{proof}
We prove \(\textup{(iii)}\Rightarrow\textup{(i)}\), \(\textup{(i)}\Rightarrow\textup{(iii)}\), \(\textup{(i)}\Rightarrow\textup{(ii)}\), and finally \(\textup{(ii)}\Rightarrow\textup{(i)}\). %

Set
\[
d(x):=f_P(x)-f_Q(x),\qquad x\in S.
\]
On $S$, the sign of $d(x)$ agrees with the sign of $\ell(x)-1$ under the conventions in~\eqref{def.ell}. Since $\ell$ is unimodal, the superlevel set
\[
A:=\{x\in S:\ell(x)\ge 1\}
\]
is an interval in $S$. In this case, \(d\) has at most one sign change on \(S\), from nonnegative to nonpositive. 

First, \textup{(iii)} $\Longrightarrow$ \textup{(i)}. Recall that the unimodality of \(\ell\) ensures that the right limit \(\ell(x_*+)\) exists in \([0,\infty]\). Suppose \(\ell(x_*+)\ge 1\). By unimodality, either \(\ell\ge 1\) on some right-neighbourhood of \(x_*\), in which case \(A\) is an initial interval of \(S\); or \(\ell<1\) throughout some such neighbourhood, which forces \(\ell(x_*+)=1\) and \(\ell\le 1\) on all of \(S\), hence \(f_P\le f_Q\) and \(\bar F_P\le \bar F_Q\). In both cases, \(P\st Q\).

It remains to treat the case where \(A\) is an initial interval of \(S\). 
Define
\[
D(x):=P([x_*,x)\cap E)-Q([x_*,x)\cap E) =\int_{[x_*,x)\cap E} d\,\dx[]\mu.
\]
The function $D$ starts at $0$, increases while $d\ge 0$, decreases after the sign change, and returns to $0$ because $\int_E d\,\dx[]\mu=0.$ Hence $D(x)\ge 0$ for every $x\in E$. But $\bar F_P(x)-\bar F_Q(x)=-D(x)$ on $S$, and the comparison is trivial outside $S$. Consequently, $P\st Q$.

Second, \textup{(i)} $\Longrightarrow$ \textup{(iii)}. We proceed by contraposition. Suppose \textup{(iii)} does not hold, that is, \(\ell(x_*+)<1\). Then $A$ does not contain a right-neighbourhood of $x_*$, so there exists $\varepsilon>0$ such that
\[
\ell(x)<1
\qquad\text{for every }x\in S\cap(x_*,x_*+\varepsilon).
\]
Hence $d<0$ on that initial segment, so $D(x)<0$ there. Since $\bar F_P(x)-\bar F_Q(x)=-D(x)$ on $S$, this gives $\bar F_P(x)>\bar F_Q(x)$ on the same segment, implying $P\not\st Q$.

Third, \textup{(i)} $\Longrightarrow$ \textup{(ii)}.
Assume $P\st Q$, and set
\[
J':=\{x\in S:\bar F_Q(x)>0\}.
\]
Then, for every $x\in J'$, $$T(x):=\dfrac{\bar F_P(x)}{\bar F_Q(x)}\le 1.$$ We claim that $T(x)\le \ell(x)$ on $J'$. This is immediate when \(\ell(x)\ge 1\). If \(\ell(x)<1\), then \(x\notin A\). Since \(A\) is an interval and \textup{(i)} implies \textup{(iii)}, the set \(A\) must be an initial interval of \(S\). Therefore
\[
\ell(y)\le \ell(x)\qquad\text{for every }y\in S\cap[x,\infty).
\]
Consequently,
\[
\bar F_P(x) =\int_{[x,\infty)\cap E} f_P\,\dx[]\mu \le \ell(x)\int_{[x,\infty)\cap E} f_Q\,\dx[]\mu =\ell(x)\bar F_Q(x),
\]
and hence $T(x)\le \ell(x)$.

\medskip
\noindent
If $S\subseteq\Z$, then for every $k\in J'$ with $\bar F_Q(k+1)>0$, write
\[
T(k)-T(k+1) = \frac{P(\{k\})+\bar F_P(k+1)}{\bar F_Q(k)} -\frac{\bar F_P(k+1)}{\bar F_Q(k+1)}.
\]
If $Q(\{k\})=0$, then $\bar F_Q(k+1)=\bar F_Q(k)$, and the right-hand side reduces to $P(\{k\})/\bar F_Q(k)\ge 0$. If $Q(\{k\})>0$, the right-hand side equals
\[
\frac{Q(\{k\})}{\bar F_Q(k)}\bigl(\ell(k)-T(k+1)\bigr).
\]
We show that this is nonnegative. When $\ell(k)\ge 1$, then $T(k+1)\le 1\le \ell(k)$. When $\ell(k)<1$, then $k\notin A$, and since $A$ is an initial interval of $S$ and $\ell$ is unimodal, $\ell$ is nonincreasing on $S\setminus A$; therefore $\ell(k+1)\le \ell(k)$. Combining with the already-proved $T(k+1)\le \ell(k+1)$ yields $T(k+1)\le \ell(k)$. In either case $T(k)\ge T(k+1)$, so $T$ is nonincreasing and $P\hr Q$.

\medskip
\noindent
If $S\subseteq\R$, then $\bar F_P$ and $\bar F_Q$ are absolutely continuous, and $\bar F_Q$ is bounded away from $0$ on every compact subinterval of $J'$, so $T=\bar F_P/\bar F_Q$ is absolutely continuous on compact subintervals of $J'$. For a.e.\ $x\in J'$,
\[
T'(x) = \frac{f_Q(x)\bar F_P(x)-f_P(x)\bar F_Q(x)}{\bar F_Q(x)^2}.
\]
Observe that the sign of $T'(x)$ matches that of $T(x)-\ell(x)\le 0$ when $f_Q(x)>0$, and the sign of $-f_P(x)/\bar F_Q(x)\le 0$ when $f_Q(x)=0$. In either case, $T'(x)\le 0$. Consequently, $T$ is nonincreasing on $J'$, so once more $P\hr Q$. This proves the implication.

Finally, \textup{(ii)} $\Longrightarrow$ \textup{(i)} follows from the standard implication $P\hr Q\Longrightarrow P\st Q$ \cite{ShakedShanthikumar}.
\end{proof}

The strongest natural hypothesis on $\ell$ is relative log-concavity. Under it, Theorem~\ref{thm:unimodal-criterion} specialises to a single endpoint check for $\hr$ and $\st$, and the $\lr$ direction reduces to a single endpoint check as well, in distinct discrete and continuous forms.

\begin{cor}\label{cor:lc-endpoints}
Let \(P\) and \(Q\) be probability measures on \(E\) with common support interval \(J\subseteq E\) and finite left endpoint \(x_0:=\inf J\). If \(P\lc Q\), then \(\ell\) is unimodal on \(J\) and
\[
P\st Q \iff P\hr Q \iff \ell(x_0+)\ge 1.
\]
\end{cor}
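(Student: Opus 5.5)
The argument has two steps. First we show that relative log-concavity forces unimodality of \(\ell\) on \(J\). Then we invoke Theorem~\ref{thm:unimodal-criterion} with \(S=J\) and \(x_*=x_0\).

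The common-support hypothesis is what makes the theorem applicable. Since \(\supp(P)=\supp(Q)=J\), the union \(S=\supp(P)\cup\supp(Q)\) equals \(J\), which is an interval with finite left endpoint \(x_0\). On \(J\) both densities are positive, so \(\ell=f_P/f_Q\) takes values in \((0,\infty)\). Hence \(\log\ell\) is a genuine real-valued concave function on \(J\), and the conventions \(a/0=+\infty\) and \(0/0=0\) never enter.

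For unimodality, set \(g:=\log\ell\), concave on \(J\). In the continuous case, concavity makes the difference quotients of \(g\) nonincreasing, so the monotonicity intervals of \(g\) can be read off directly:
\begin{itemize}
\item if \(g\) is nondecreasing on all of \(J\), take \(x_0'=\sup J\) (or treat it as the monotone limit case);
\item if \(g\) is nonincreasing on all of \(J\), take the mode to be \(x_0\);
\item otherwise, any maximiser of \(g\) over \(J\) serves as the mode, when one exists.
\end{itemize}
Some care is needed when the supremum of \(g\) is not attained. The set \(\{x\in J: g \text{ is nondecreasing on } J\cap(-\infty,x]\}\) is an initial interval, and its supremum \(m\) is where the trend reverses. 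If \(m\) lies in \(J\) it serves as the mode. If \(m\) is the right endpoint, \(g\) is nondecreasing on \(J\), which is unimodal with \(x_0\) taken as the last point or, in the open case, handled by reading Definition~\ref{def:unimodal} with \(J\cap[x_0,\infty)\) empty. In the discrete case the argument is simpler: \(\Delta^2 g\le0\) means \(\Delta g(k)\) is nonincreasing in \(k\), so \(\Delta g\) changes sign at most once, from \(\ge0\) to \(<0\). Taking \(x_0\) as the first \(k\) with \(\Delta g(k)<0\), or the last point if there is none, gives unimodality. In every case, composing with the increasing map \(\exp\) transfers unimodality from \(g\) to \(\ell\).

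With unimodality established, Theorem~\ref{thm:unimodal-criterion} yields the equivalences \(P\st Q\iff P\hr Q\iff \ell(x_0+)\ge1\) directly. In the discrete case the right limit \(\ell(x_0+)\) is simply \(\ell(x_0)\).

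The only delicate point is the unimodality step when \(J\) is open on the right and the concave function increases without attaining a maximum. There I would argue that Definition~\ref{def:unimodal} is satisfied by taking the mode at the boundary. If the definition strictly requires the mode to lie in \(J\), I would instead check that the proof of Theorem~\ref{thm:unimodal-criterion} uses only that \(\{\ell\ge1\}\) is an interval and that \(\ell\) is nonincreasing beyond it. Both properties hold trivially for a monotone \(\ell\).
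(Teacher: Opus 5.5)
Your proposal is correct and follows the paper's proof: log-concavity of $\ell$ gives unimodality, and Theorem~\ref{thm:unimodal-criterion} with $S=J$, $x_*=x_0$ then yields the equivalences. The only difference is that the paper simply cites Karlin for the unimodality step, whereas you argue it directly. Your worry about an increasing $\log\ell$ whose supremum is not attained is legitimate, and the paper glosses over it: exponential laws with rates $1$ and $2$ give $\ell(x)=e^{x}/2$ on $[0,\infty)$, which is not unimodal under Definition~\ref{def:unimodal} as stated. In that nondecreasing case the theorem's argument still goes through. However, your fallback claim that its proof uses only that $\{\ell\ge 1\}$ is an interval with $\ell$ nonincreasing beyond it is too strong in general. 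A set $\{\ell\ge 1\}$ lying strictly inside $S$, with $\ell(x_*+)=1$ approached from below, satisfies both properties yet breaks the endpoint criterion.
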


\begin{proof}
Since \(P\lc Q\), \(\ell\) is log-concave on \(J\), hence unimodal \cite{Karlin1968}. Applying Theorem~\ref{thm:unimodal-criterion} with $S=J$ and $x_*=x_0$ gives the equivalence.
\end{proof}

Building on this result, the likelihood-ratio order reduces to a single-point endpoint test.
\begin{cor}\label{cor:lc-lr}
Assume the hypotheses of Corollary~\ref{cor:lc-endpoints} hold. If  \(x_0\in\supp(P)\), then
\[
P\lr Q \iff
\begin{dcases}
\ell'(x_0+)\le 0 & E\subseteq\R \\
\ell(x_0+1) - \ell(x_0)\le 0 & E\subseteq\N_0,
\end{dcases}
\]
\end{cor}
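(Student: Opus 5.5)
The plan is to show that, under relative log-concavity, monotonicity of \(\ell\) on \(J\) is decided by its behaviour just to the right of \(x_0\). Since \(P\lc Q\) and the support \(J\) is common, \(g:=\log\ell\) is a finite concave function on \(J\). The definitions give \(P\lr Q\) exactly when \(\ell\), equivalently \(g\), is nonincreasing on \(J\). The forward implication is immediate in both cases. If \(\ell\) is nonincreasing, then \(\ell(x_0+1)\le\ell(x_0)\) in the discrete case. In the continuous case every right difference quotient of \(\ell\) at \(x_0\) is \(\le 0\), so the one-sided derivative \(\ell'(x_0+)\), which exists in \([-\infty,\infty)\) because \(\ell=e^{g}\) with \(g\) concave, is \(\le 0\).

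For the converse we use the slopes of \(g\).

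\emph{Discrete case.} Concavity means \(\Delta^2 g(k)\le 0\) for every admissible \(k\), so the increments \(\Delta g(k)\) are nonincreasing in \(k\) along \(J\). The hypothesis \(\ell(x_0+1)-\ell(x_0)\le 0\) gives \(\Delta g(x_0)\le 0\), since \(\ell>0\) and \(\log\) is increasing. Therefore \(\Delta g(k)\le\Delta g(x_0)\le 0\) for all \(k\ge x_0\) in \(J\). Hence \(g\), and so \(\ell\), is nonincreasing, which is \(P\lr Q\). Here \(x_0\in\supp(P)\) is used to ensure \(\ell(x_0)\) is finite and positive.

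\emph{Continuous case.} A concave \(g\) has a right derivative \(g'_+\) that is nonincreasing on the interior of \(J\). At \(x_0\), \(g'_+(x_0)=\ell'(x_0+)/\ell(x_0+)\) whenever \(\ell(x_0+)\in(0,\infty)\), so \(\ell'(x_0+)\le 0\) gives \(g'_+(x_0)\le 0\). Monotonicity of \(g'_+\) then gives \(g'_+(x)\le 0\) for all \(x>x_0\). A concave function with nonpositive right derivative everywhere is nonincreasing, so \(\ell\) is nonincreasing on \(J\).

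The main obstacle is the meaning of \(\ell'(x_0+)\) at the boundary. The limit \(\ell(x_0+)\) could be \(0\) or \(+\infty\); the hypothesis \(x_0\in\supp(P)\) is used to exclude \(\ell(x_0+)=0\). The value \(+\infty\) is incompatible with \(\ell'(x_0+)\le 0\) being meaningful. I would therefore interpret \(\ell'(x_0+)\) as \(\lim_{x\downarrow x_0}\ell'(x)\), or as the right derivative of the continuous extension, and prove the result as follows. Suppose \(\ell'(x)>0\) at some interior point \(x_1\). By concavity, \(g'\) is then positive on all of \((x_0,x_1)\), being at least its value at \(x_1\). Letting \(x\downarrow x_0\) keeps \(\ell'(x)=\ell(x)g'(x)\ge \ell(x)\,g'(x_1)\), with \(\ell(x)\) bounded below near \(x_0\). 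This forces \(\ell'(x_0+)>0\), which proves the contrapositive. This is the step where I would be careful with one-sided limits and with the existence of derivatives only almost everywhere.
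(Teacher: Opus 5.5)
Your proposal is correct and follows the paper's route: concavity of $\log\ell$ makes $\Delta\log\ell$ (discrete case) or the right derivative of $\log\ell$ (continuous case) nonincreasing, so its sign at $x_0$ decides monotonicity, and positivity of $\ell$ at $x_0$ transfers that sign between $\log\ell$ and $\ell$. Your extra contrapositive argument for the reading $\ell'(x_0+)=\lim_{x\downarrow x_0}\ell'(x)$ is a sound refinement of the paper's one-line continuous case.

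Two small corrections. First, like the paper, you treat $\ell(x_0+)>0$ as a consequence of $x_0\in\supp(P)$. In the continuous case it is really an extra assumption that both arguments need: gamma laws on $[0,\infty)$ with $r_1>r_2+1$ have $\ell(0+)=\ell'(0+)=0$ although $\ell$ increases near $0$. Second, the right derivative of a concave function at its left endpoint lies in $(-\infty,+\infty]$, not $[-\infty,\infty)$. Also, $\ell(x_0+)=+\infty$ cannot occur at all, because a concave $\log\ell$ is bounded above near a finite endpoint. Neither point affects the rest of your argument.
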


\begin{proof}
Since \(P\lc Q\), \(\ell\) is unimodal as established in the previous proof. The assumption \(x_0\in\supp(P)\) gives \(\ell(x_0)>0\).

First, consider the case \(E\subseteq\N_0\). Concavity of \(\log\ell\) implies \(\Delta\log\ell(k)\) nonincreasing in \(k\). Therefore, \(\Delta\log\ell(k)\le 0\) for every admissible \(k\in J\) if and only if $\Delta\log\ell(x_0)\le 0.$ It follows that \(\ell(x_0+1)\le \ell(x_0)\) since \(\ell(x_0)>0\). Hence, \(P\lr Q\). The case \(E\subseteq\R\) is analogous. Concavity of \(\log\ell\) makes its right derivative nonincreasing on \(J\). Therefore, \((\log\ell)'(x)\le 0\) on \(J\) if and only if \((\log\ell)'(x_0+)\le 0\). Since \(\ell(x_0)>0\), the sign of  \((\log\ell)'(x_0+)\) coincides with that of \(\ell'(x_0+)\). Hence, \((\log\ell)'(x_0+)\le 0\) coincides with \(\ell'(x_0+)\le 0\), and therefore to \(P\lr Q\).
\end{proof}

\begin{example}[Gamma versus Gamma]
\label{ex:gamma-vs-gamma}
For \(P=\Gam(r_1,\beta_1)\) and \(Q=\Gam(r_2,\beta_2)\) on \((0,\infty)\), where \(\Gam(r,\beta)\) denotes the gamma law with shape \(r\) and scale \(\beta\), we have
\[
\ell(x)=\frac{\Gamma(r_2)\beta_2^{r_2}}{\Gamma(r_1)\beta_1^{r_1}} x^{r_1-r_2}\exp\!\left(-(\beta_1^{-1}-\beta_2^{-1})x\right),\qquad x>0.
\]
and
\[
\partial_x^2\log\ell(x)=-\frac{r_1-r_2}{x^2},
\]
Hence, \(P\lc Q\) if and only if \(r_1\ge r_2\). Under that assumption, Corollary~\ref{cor:lc-endpoints} gives
\[
\Gam(r_1,\beta_1)\st\Gam(r_2,\beta_2) \iff \Gam(r_1,\beta_1)\hr\Gam(r_2,\beta_2) \iff \ell(0+)\ge 1.
\]
Now \(\ell(0+)=0\) when \(r_1>r_2\), while \(r_1=r_2=r\) implies that  $\ell(0+)=\left(\frac{\beta_2}{\beta_1}\right)^r.$ Therefore, under the relative log-concavity condition \(r_1\ge r_2\),
\begin{align*}
\Gam(r_1,\beta_1)\st\Gam(r_2,\beta_2)
&\iff \Gam(r_1,\beta_1)\hr\Gam(r_2,\beta_2)\\
&\iff r_1=r_2\ \text{and}\ \beta_1\le \beta_2.
\end{align*}
\end{example}

The next example illustrates a comparison that is unimodal but not relative-log-concave, so Theorem~\ref{thm:unimodal-criterion} applies but Corollary~\ref{cor:lc-endpoints} does not.

\begin{example}[Half-Student in degrees of freedom]
\label{ex:half-student}
Let $|X_\nu|$ be the half-Student law on $[0,\infty)$ with $\nu$ degrees of freedom. The density is
\[
f_\nu(x)=\frac{2\,\Gamma((\nu+1)/2)}{\sqrt{\nu\pi}\,\Gamma(\nu/2)}\Bigl(1+\frac{x^2}{\nu}\Bigr)^{-(\nu+1)/2}.
\]
For $\nu_1<\nu_2$, a direct calculation gives
\[
\partial_x\log\ell(x) =\frac{(\nu_2-\nu_1)\,x(1-x^2)}{(\nu_1+x^2)(\nu_2+x^2)},
\]
which is positive on $(0,1)$ and negative on $(1,\infty)$. Thus, $\log\ell$ increases up to $x=1$ and decreases afterwards, so both $\log\ell$ and $\ell$ are unimodal on $[0,\infty)$, with a unique mode at $x=1$. On the other hand, $\log\ell$ is not concave on $[0,\infty)$: its right derivative at $0$ is $0$, but it becomes positive immediately to the right of $0$, which cannot happen for a concave function. Therefore Theorem~\ref{thm:unimodal-criterion} applies, whereas Corollary~\ref{cor:lc-endpoints} does not. The endpoint condition then becomes
\[
|X_{\nu_2}|\st|X_{\nu_1}|\iff |X_{\nu_2}|\hr|X_{\nu_1}|\iff \ell(0)\ge 1\iff f_{\nu_2}(0)\ge f_{\nu_1}(0),
\]
and $\nu\mapsto f_\nu(0)$ is increasing on $(0,\infty)$, since concavity of $\psi$ together with the recurrence $\psi(z+1)-\psi(z)=1/z$ gives $\psi(z+\tfrac{1}{2})-\psi(z)>1/(2z)$. Hence, larger degrees of freedom give a smaller half-Student random variable in both the usual stochastic and hazard-rate orders:
\[
\nu_1<\nu_2 \quad\Longrightarrow\quad |X_{\nu_2}|\st|X_{\nu_1}|
\quad\text{and}\quad
|X_{\nu_2}|\hr|X_{\nu_1}|.
\]
\end{example}

\subsection{Sign-pattern endpoint criteria}

The unimodality hypothesis can be relaxed to a sign-pattern condition on \(\ell-1\). \cite{YaoLouPan2023} proved the corresponding endpoint reduction for nonnegative random variables. In our notation, their criterion assumes that, on \([0,\infty)\), \(\log \ell\) has at most two roots and is negative beyond the largest root. Under that hypothesis, the hazard-rate and usual stochastic orders reduce to the single endpoint check \(\ell(0)\ge 1\), with the hazard-rate part also using the same right-tail monotonicity as in Proposition~\ref{prop:ylp-kernel}.

Recall that on $(0,\infty)$ the zero sets of \(\log\ell\) and of \(\ell-1\) coincide, and so do their sign patterns. The Yao--Lou--Pan hypothesis therefore amounts to two requirements: a sign-pattern condition (at most two sign changes of \(\ell-1\), negative on the rightmost piece) and a regularity condition (the zero set is discrete, i.e., at most two points). The support-interval version below relaxes the regularity requirement, retaining only the sign pattern, which is the only ingredient the conditional tail-expectation argument actually uses.

\begin{lem}
\label{lem:tail-mean-sign}
Let $\mu$ be a probability law on an ordered support $J \subseteq E$ with finite least element $x_0$, and let $\phi:J\to\R$ be $\mu$-integrable with $\E[\phi(X)]=0$. If $\phi$ has at most two sign changes on $J$ and is nonpositive on the rightmost part, then
\[
\E[\phi(X)\mid X\ge x]\le 0 \quad\text{for every }x\in J\text{ with }\bar F_\mu(x)>0 \quad\Longleftrightarrow\quad \phi(x_0)\ge 0,
\]
where the forward implication additionally requires that, in the case $\phi(x_0)=0$, $\phi$ is nonnegative on a right-neighbourhood of $x_0$.
\end{lem}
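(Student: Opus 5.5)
The plan is to work with the tail integral $G(x):=\E[\phi(X)\one\{X\ge x\}]$. Since $\bar F_\mu(x)>0$, the conditional tail mean $\E[\phi(X)\mid X\ge x]$ has the same sign as $G(x)$. The left statement is therefore equivalent to $G(x)\le 0$ for every $x\in J$ with $\bar F_\mu(x)>0$. Because $\E[\phi(X)]=0$, we may also write $G(x)=-\E[\phi(X)\one\{X<x\}]$. Two observations drive everything. First, $G$ vanishes at $x_0$ (since $G(x_0)=\E\phi=0$) and in the far right tail. Second, the sign pattern of $\phi$ controls the monotonicity of $G$: $G$ decreases where $\phi\ge 0$ and increases where $\phi\le 0$, read from left to right. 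This is the same cumulative-integral device as the function $D$ in the proof of Theorem~\ref{thm:unimodal-criterion}, but with one extra sign change.

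For the reverse implication, assume $\phi(x_0)\ge 0$. The sign pattern then has at most two changes and ends nonpositive. Counting from the left, the only possibilities are $(-)$, $(+,-)$ and $(-,+,-)$.
\begin{itemize}
\item If the first piece is nonnegative, the pattern is $(+,-)$ or simply $(-)$. In this case $G$ decreases from $G(x_0)=0$ and then increases back toward $\lim_{x\to\sup J}G(x)=0$, so $G\le 0$ throughout.
\item If the first piece is negative, the hypothesis $\phi(x_0)\ge 0$ forces $\phi(x_0)=0$ with $\phi<0$ immediately to the right. In the discrete case a single atom at $x_0$ with $\phi=0$ contributes nothing, so it can be absorbed into the negative piece.
\end{itemize}
The pattern $(-,+,-)$ would be harmful, so I would argue as follows. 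With $\phi(x_0)\ge 0$ and $\phi$ nonpositive on the rightmost part, the sign sequence starting from a nonnegative value can have at most the changes $+\to-$, then $-\to+$, then $+\to-$. That is three changes, so with at most two allowed we are reduced to $(+,-)$ or $(+)$. The case $(+)$ is impossible unless $\phi\equiv0$ a.s., because the mean is zero and the tail is nonpositive. A cleaner formulation is to write $G(x)=-\E[\phi\one\{X<x\}]$ on the region where $\phi\ge0$ (the initial piece), where it is clearly $\le 0$. On the final nonpositive piece, $G(x)=\E[\phi\one\{X\ge x\}]\le 0$ directly. So in the $(+,-)$ case no monotonicity argument is even needed: the two representations of $G$ handle the two pieces separately.

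For the forward implication I would use contraposition. Suppose $\phi(x_0)<0$, or $\phi(x_0)=0$ with $\phi$ not nonnegative near $x_0$, which is exactly where the extra hypothesis enters. Then $\phi<0$ on an initial segment of positive $\mu$-mass. For $x$ just to the right of $x_0$ (in the discrete case, $x=x_0+1$), we get $G(x)=-\E[\phi\one\{X<x\}]>0$, while $\bar F_\mu(x)>0$ because otherwise $\E\phi<0$. This yields a conditional tail mean that is strictly positive.

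The main obstacle is bookkeeping the zero set of $\phi$. Sign changes must be counted ignoring zeros, in the same spirit in which the paper relaxes the root-count hypothesis, so I would first fix the convention: $\phi$ has at most two sign changes if $J$ splits into consecutive intervals $I_1<I_2<I_3$ (possibly empty) with $\phi\ge0$, $\le0$, $\ge0$ alternately, or the reverse. I would then check that the endpoint value $\phi(x_0)$ determines which alternating pattern applies. The delicate point is the case $\phi(x_0)=0$, where the point $x_0$ itself can belong to either piece. The side condition in the statement is precisely what resolves this. In the continuous case I would also note that a single point has $\mu$-measure zero, so "right-neighbourhood" must be read $\mu$-a.e.
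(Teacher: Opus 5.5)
Your reverse implication ($\phi(x_0)\ge 0\Rightarrow$ every conditional tail mean is $\le 0$) has a genuine gap at the step that reduces everything to the pattern $(+,-)$. The count ``$+\to-$, $-\to+$, $+\to-$ makes three changes'' treats a nonnegative starting value as a $+$. Under your own convention zeros are discarded, so when $\phi(x_0)=0$ the pattern $0,-,+,-$ has only two sign changes and ends nonpositive, hence is admissible. The conclusion really fails there: for $x$ at the left end of the positive block, $G(x)=-\E[\phi(X)\one\{X<x\}]>0$ while $\bar F_\mu(x)>0$. So $\phi(x_0)\ge 0$ alone is not sufficient, and the case you correctly flagged in your second bullet is not removed by the counting argument. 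Excluding it is exactly the job of the side condition, and the paper uses it in this direction: when $\phi(x_0)=0$, the right-neighbourhood hypothesis is what produces a point $b$ with $\phi\ge 0$ on $[x_0,b]\cap J$ and $\phi\le 0$ on $[b,\sup J)\cap J$. Once $b$ is in hand, your two-representation argument is correct, namely $G(x)=-\E[\phi\one\{X<x\}]\le 0$ for $x\le b$ and $G(x)=\E[\phi\one\{X\ge x\}]\le 0$ for $x\ge b$. It is slightly cleaner than the paper's ``$I$ rises, then falls back to $0$'', since it needs no limit at $\sup J$.

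You attached the side condition, following the literal word ``forward'', to the contrapositive of the other direction, where it does no work. The negation of $\phi(x_0)\ge 0$ is just $\phi(x_0)<0$, and ``tail means $\le 0\Rightarrow\phi(x_0)\ge 0$'' needs no condition of that kind. What that direction does need is $\phi<0$ on an initial segment of positive mass. The atom at $x_0$ supplies this in the discrete case. In the continuous case, a single value $\phi(x_0)<0$ followed by $+,-$ is compatible with two sign changes and with all tail means $\le 0$, so you need continuity of $\phi$ at $x_0$, as in Proposition~\ref{prop:ylp-kernel}. The paper's proof makes the same tacit assumption. Finally, when you repair the reverse direction, read the side condition as ``the first nonzero sign of $\phi$ is $+$''. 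A function with $\phi\equiv 0$ on $[x_0,x_0+\delta)$ followed by $-,+,-$ is literally nonnegative on a right-neighbourhood of $x_0$, yet it still breaks the conclusion. The paper's existence claim for $b$ tacitly uses this stronger reading.
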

\begin{proof}
Set $D(x):=\E[\phi(X)\mid X\ge x]$ on $\{x\in J:\bar F_\mu(x)>0\}$, and $I(x):=\int_{[x_0,x)}\phi\,\dx[]\mu$. Multiplying $D(x)$ by $\bar F_\mu(x)$ gives
\[
D(x)\bar F_\mu(x) =\E_\mu\bigl[\phi(X)\one\{X\ge x\}\bigr] =-I(x),
\]
where the last equality uses $\E[\phi(X)]=0$. Hence, $D(x)$ and $I(x)$ have opposite signs.

The function $I$ vanishes at both ends of $J$: $I(x_0)=0$ trivially, and $\lim_{x\uparrow\sup J}I(x)=\int_J\phi\,\dx[]\mu=0$. Moreover, $I$ is continuous (or piecewise constant in the discrete case) and monotone on each maximal interval where $\phi$ has constant sign (nondecreasing where $\phi\ge 0$ and nonincreasing where $\phi\le 0$). 

Suppose first that \(\phi(x_0)\ge 0\). Under the right-neighbourhood hypothesis when \(\phi(x_0)=0\), the sign-change bound gives a point \(b\in J\) such that \(\phi\ge 0\) on \([x_0,b]\cap J\) and \(\phi\le 0\) on \([b,\sup J)\cap J\). Zero intervals between the two pieces are absorbed into either side. Hence, $I$ starts at $0$, increases while $\phi\ge 0$, then decreases while $\phi\le 0$, and ends once more at $0$. Consequently, $I(x)\ge 0$ for every $x\in J$, so $D(x)\le 0$ wherever it is defined.

Conversely, if $\phi(x_0)<0$, then $I(x)<0$ on some right-neighbourhood of $x_0$ where $\bar F_\mu(x)>0$, and therefore $D(x)>0$ there.
\end{proof}
\begin{rem}
The forward direction of this lemma is the case \(n=0\) of Theorem~5.4 in Chapter~XI of~\cite{KarlinStudden1966}, with \(u_0\equiv 1\) and signed measure \(-\phi\,\dx[]\mu\), applied to tail indicators \(\one\{t\ge x\}\).
\end{rem}

\begin{prop}
\label{prop:ylp-kernel}
Let \(P,Q\) be probability laws on \(E\) whose common support interval \(J\subseteq E\) has finite left endpoint \(x_0:=\min J\), and let \(\ell\) be the likelihood ratio on \(J\), assumed continuous when \(J\subseteq\R\). Suppose \(\phi:=\ell-1\) has at most two sign changes on \(J\) and is negative
on the final sign-constant part of the interval. If \(\ell(x_0)=1\), assume in addition that \(\ell\ge 1\) on a right-neighbourhood of \(x_0\), then
\[
P\st Q \iff \ell(x_0)\ge 1.
\]
If, in addition, $\ell$ is nonincreasing on the rightmost sign-constant part of the interval of $\phi$ (equivalently, $\ell'(x)\le 0$ there when $J\subseteq\R$, or $\Delta\ell(k)\le 0$ there when $J$ is an integer interval), the same equivalent conditions characterise $P\hr Q$.
\end{prop}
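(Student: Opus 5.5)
The plan is to deduce the $\st$ part from Lemma~\ref{lem:tail-mean-sign} with the law $Q$ and the test function $\phi=\ell-1$, and then to handle $\hr$ by repeating the survival-ratio argument from the proof of Theorem~\ref{thm:unimodal-criterion}.

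\textbf{Usual stochastic order.} On the common support $J$ both densities are positive, so $f_P=\ell f_Q$. Hence
\[
\E_Q[\phi(Y)]=\int_J(\ell-1)f_Q\,\dx[]\mu=P(J)-Q(J)=0,
\]
and $\phi$ is $Q$-integrable. For $x\in J$ with $\bar F_Q(x)>0$,
\[
\E_Q[\phi(Y)\mid Y\ge x]=\frac{\bar F_P(x)}{\bar F_Q(x)}-1 .
\]
So $P\st Q$ holds exactly when this conditional tail mean is $\le 0$ at every such $x$; points with $\bar F_Q(x)=0$ also have $\bar F_P(x)=0$ by the common support. The sign-pattern hypothesis and the extra right-neighbourhood assumption at $\ell(x_0)=1$ are precisely those of Lemma~\ref{lem:tail-mean-sign}. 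The lemma therefore gives $P\st Q\iff\phi(x_0)\ge0\iff\ell(x_0)\ge1$.

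\textbf{Role of continuity.} In the converse direction, the lemma needs $\phi<0$ on a right-neighbourhood of $x_0$ once $\phi(x_0)<0$. In the continuous case I would get this from the assumed continuity of $\ell$. In the discrete case it is automatic, because $\{x_0\}$ has positive mass.

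\textbf{Hazard-rate order.} The implication $P\hr Q\Rightarrow P\st Q$ is standard, so only the converse needs work. Assume $\ell(x_0)\ge1$, so that $P\st Q$, and set $T:=\bar F_P/\bar F_Q\le 1$. As in Theorem~\ref{thm:unimodal-criterion}, the key inequality is $T(x)\le\ell(x)$ for every $x$.
\begin{itemize}
\item Where $\ell(x)\ge1$, this follows from $T\le1$.
\item Where $\ell(x)<1$, I need $x$ to lie in the rightmost sign-constant piece $R$, on which $\ell$ is nonincreasing. Then $\ell(y)\le\ell(x)$ for $y\ge x$, and integrating gives $\bar F_P(x)\le\ell(x)\bar F_Q(x)$.
\end{itemize}
With $T\le\ell$ in hand, I reuse the derivative and difference computations. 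In the continuous case $T'$ has the sign of $T-\ell\le0$. In the discrete case,
\[
T(k)-T(k+1)\propto\ell(k)-T(k+1),
\]
which is $\ge0$ because either $\ell(k)\ge1\ge T(k+1)$, or $k\in R$ and then $T(k+1)\le\ell(k+1)\le\ell(k)$.

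\textbf{Main obstacle.} The hardest step is showing that $\{\ell<1\}\subseteq R$. The sign pattern allows $+,-,+,-$ only if there are three changes, so two changes with a negative tail give the patterns $-,+,-$ or $+,-$ or $-$. Since $\ell(x_0)\ge1$, together with the right-neighbourhood assumption when $\ell(x_0)=1$, rules out a leading negative piece, the pattern is $+,-$. So every point with $\ell<1$ lies in the final negative piece.

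Zero stretches of $\phi$ need some care. A point with $\ell(x)=1$ sitting inside a zero interval before $R$ still satisfies $T\le1=\ell$, so no monotonicity is required there. I would also define $R$ to begin at the first point of the final negative part, so that the monotonicity hypothesis covers every point with $\ell<1$.
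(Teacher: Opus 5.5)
Your argument follows the paper's proof step for step. The \(\st\) part is Lemma~\ref{lem:tail-mean-sign} applied to \((\ell-1,Q)\), using \(\E_Q[\phi(Y)\mid Y\ge x]=\bar F_P(x)/\bar F_Q(x)-1\). The \(\hr\) part reduces to the pointwise bound \(T\le\ell\), split by the sign of \(\phi\) and using monotonicity of \(\ell\) on the final negative piece. The only difference is cosmetic: the paper reads \(\ell(x)\ge T(x)\) directly as the hazard inequality \(\phi(x)\ge\E_Q[\phi(X)\mid X\ge x]\), whereas you re-run the derivative and difference computation of Theorem~\ref{thm:unimodal-criterion}; both rest on the same inequality.

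The step you single out as the main obstacle fails as written, and no argument can close it. When \(\ell(x_0)=1\), the assumption that \(\ell\ge 1\) on a right-neighbourhood of \(x_0\) does not exclude a leading negative piece. The function \(\phi\) may vanish on an initial stretch and then dip below \(0\) before it ever becomes positive. Zeros are ignored when counting sign changes, as Example~\ref{ex:ylp-comparison}\textup{(C)} and \textup{(D)} require, so the pattern is then \(-,+,-\) with only two changes.

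Concretely, let \(Q\) be uniform on \(\{0,\dots,4\}\) and \(\ell=(1,1,\tfrac12,2,\tfrac12)\), so that \(P=(\tfrac15,\tfrac15,\tfrac1{10},\tfrac25,\tfrac1{10})\). Every hypothesis holds: \(\ell\ge 1\) on \(\{0,1\}\), and the final negative piece \(\{4\}\) is trivially monotone. Moreover \(\ell(0)=1\). Yet \(\bar F_P(3)=\tfrac12>\tfrac25=\bar F_Q(3)\), so \(P\not\st Q\), and hence also \(P\not\hr Q\).

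The paper's proof has the same defect. Its \(\hr\) step asserts that \(\phi(x)<0\) forces \(x\) into the rightmost part. The proof of Lemma~\ref{lem:tail-mean-sign} claims that the right-neighbourhood hypothesis yields a single switch point \(b\). Your \(\st\) part inherits the flaw through that lemma.

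The repair is to strengthen the extra assumption at \(\ell(x_0)=1\) to ``the first nonzero sign of \(\ell-1\) is positive''. For example, require \(\ell\ge 1\) on \(J\cap[x_0,y]\) for some \(y\) with \(\ell(y)>1\). This forces the pattern \(+,-\), and then your argument, like the paper's, goes through unchanged.
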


\begin{proof}
Working under $Q$, the function $\phi(x):=\ell(x)-1$ satisfies
\[
\E_Q[\phi(X)]=\int_J\bigl(f_P-f_Q\bigr)\,\dx[]\mu=0, \qquad \E_Q[\phi(X)\mid X\ge x]=\frac{\bar F_P(x)}{\bar F_Q(x)}-1
\]
on $\{x\in J:\bar F_Q(x)>0\}$, and inherits from $\ell$ at most two sign changes on $J$ with $\phi<0$ on the rightmost sign-constant interval. Therefore,  $P\st Q\iff \E_Q[\phi(X)\mid X\ge x]\le 0$, holds for every $x\in J\text{ with }\bar F_Q(x)>0$. Applying Lemma~\ref{lem:tail-mean-sign} to $(\phi,Q)$ gives
\[
P\st Q\iff \phi(x_0)\ge 0\iff \ell(x_0)\ge 1.
\]
This is the equivalence (i)$\Leftrightarrow$(ii); the converse direction of Lemma~\ref{lem:tail-mean-sign} furnishes the implication $\ell(x_0)<1\Rightarrow P\not\st Q$ directly.

For the $\hr$ part, on $\{x\in J:\bar F_Q(x)>0\}$,
\[
P\hr Q\iff \frac{f_P(x)}{\bar F_P(x)}\ge\frac{f_Q(x)}{\bar F_Q(x)} \iff \ell(x)\ge\frac{\bar F_P(x)}{\bar F_Q(x)},
\]
that is, $\phi(x)\ge \E_Q[\phi(X)\mid X\ge x]$. Assume $\ell(x_0)\ge 1$ and $\ell$ is nonincreasing on the rightmost sign-constant interval of $\phi$. If $\phi(x)\ge 0$, the $\st$ part of the proposition gives $\E_Q[\phi(X)\mid X\ge x]\le 0\le \phi(x)$. If $\phi(x)<0$, then $x$ lies on the rightmost  (sign-constant) part of the interval, so $\phi(u)\le \phi(x)$ for every $u\ge x$, hence $\E_Q[\phi(X)\mid X\ge x]\le \phi(x)$. Consequently, $P\hr Q$. The converse follows from the standard implication $P\hr Q\Longrightarrow P\st Q$ \cite{ShakedShanthikumar}.
\end{proof}

A direct alternative to Proposition~\ref{prop:ylp-kernel} is to control only the shape of the superlevel set \(\{\ell\ge 1\}\) at the left endpoint. The next result specialises Proposition~\ref{prop:ylp-kernel} to a single sign change of $f_P-f_Q$, in exchange for dropping the continuity hypothesis on $\ell$ when $J\subseteq\R$.

\begin{prop}
\label{prop:ylp-one-sided-score}
Let $P,Q$ be probability laws on $E$ whose common support interval $J\subseteq E$ has finite left endpoint $x_0:=\min J$. If $\ell(x_0)\ge 1$ and the superlevel set
\[
A:=\{x\in J:\ell(x)\ge 1\}
\]
is an interval, then $P\st Q$. If, in addition, $\ell$ is nonincreasing on $J\setminus A$, then $P\hr Q$.
\end{prop}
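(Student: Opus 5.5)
The argument repeats the integrated-difference argument from the proof of Theorem~\ref{thm:unimodal-criterion}, with unimodality replaced by the interval hypothesis on $A$. Throughout, $d:=f_P-f_Q$ on $J$, whose sign agrees with that of $\ell-1$. Since $x_0\in A$ and $A$ is an interval, $A$ is an initial segment of $J$: $A=J\cap[x_0,b)$ or $A=J\cap[x_0,b]$ for some $b\in J\cup\{\sup J\}$. Hence $d\ge 0$ on $A$ and $d<0$ on $J\setminus A$, which is a final segment of $J$. No continuity of $\ell$ is needed, because only this set-theoretic picture is used, not the location of roots. This already shows that $\phi:=\ell-1$ has at most one sign change, from nonnegative to negative. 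So one could invoke Lemma~\ref{lem:tail-mean-sign} for $(\phi,Q)$; its right-neighbourhood proviso holds automatically, since $A$ contains $x_0$ and is an initial interval. I would nevertheless write the direct argument to avoid any reliance on regularity.

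\textbf{The $\st$ part.} Put $D(x):=P([x_0,x)\cap E)-Q([x_0,x)\cap E)=\int_{[x_0,x)\cap E}d\,\dx[]\mu$.
\begin{itemize}
\item $D(x_0)=0$, and $D$ is nondecreasing on $A$ because $d\ge0$ there.
\item $D$ is nonincreasing on $J\setminus A$, and $D(x)\to\int_J d\,\dx[]\mu=0$ as $x\uparrow\sup J$.
\item A function that rises from $0$ and then falls back to $0$ is nonnegative, so $D\ge 0$ on $J$.
\end{itemize}
Since $\bar F_P(x)-\bar F_Q(x)=-D(x)$ on $J$, and the comparison is trivial outside $J$, we get $P\st Q$. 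In the discrete case the same holds for partial sums.

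\textbf{The $\hr$ part.} Set $J':=\{x\in J:\bar F_Q(x)>0\}$ and $T:=\bar F_P/\bar F_Q$ on $J'$; the $\st$ part gives $T\le1$. The key claim is $T(x)\le\ell(x)$ on $J'$.
\begin{itemize}
\item If $x\in A$, then $\ell(x)\ge1\ge T(x)$.
\item If $x\notin A$, then $[x,\infty)\cap J\subseteq J\setminus A$, since $A$ is initial. Because $\ell$ is nonincreasing on $J\setminus A$, we have $f_P(y)=\ell(y)f_Q(y)\le\ell(x)f_Q(y)$ for $y\ge x$. Integrating gives $\bar F_P(x)\le\ell(x)\bar F_Q(x)$.
\end{itemize}

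It remains to convert $T\le\ell$ into monotonicity of $T$, exactly as in Theorem~\ref{thm:unimodal-criterion}.
\begin{itemize}
\item \emph{Discrete case.} Here $T(k)-T(k+1)=\frac{Q(\{k\})}{\bar F_Q(k)}\bigl(\ell(k)-T(k+1)\bigr)$. We need $T(k+1)\le\ell(k)$.
  \begin{itemize}
  \item If $k\in A$, this follows from $T(k+1)\le1\le\ell(k)$.
  \item If $k\notin A$, then $k+1\notin A$ as well, so $T(k+1)\le\ell(k+1)\le\ell(k)$ by the monotonicity on $J\setminus A$.
  \end{itemize}
\item \emph{Continuous case.} $T$ is absolutely continuous on compact subintervals of $J'$, with $T'=f_Q\,(T-\ell)/\bar F_Q\le0$ a.e. This uses positivity of the densities on $J$, so the $f_Q=0$ subcase does not arise.
\end{itemize}

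The step needing the most care is the continuous case without continuity of $\ell$. The point to verify is that the a.e.\ derivative formula for $T$ requires only absolute continuity of $\bar F_P$ and $\bar F_Q$, not regularity of $\ell$. We also need $T\le\ell$ pointwise for every $x$, not merely a.e., and the argument above gives it pointwise. A jump of $\ell$ across level $1$ at $b$ is harmless: whichever side $b$ is assigned to, the two pointwise bounds cover it.
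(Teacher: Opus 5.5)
Your proof is correct and follows essentially the same route as the paper. The initial-interval shape of $A$ forces a single sign change of $f_P-f_Q$, the integrated difference $D$ then gives $P\st Q$, and the pointwise bound $T\le\ell$ (which you prove on all of $J'$ at once, whereas the paper treats $A$ via $f_P\ge f_Q$, $\bar F_P\le\bar F_Q$) feeds the monotonicity computations of Theorem~\ref{thm:unimodal-criterion}.

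One small point in your side remark: the right-neighbourhood proviso of Lemma~\ref{lem:tail-mean-sign} does not follow from $A$ being an initial interval alone, since a priori $A=\{x_0\}$ with $\ell(x_0)=1$. That case is excluded only because $f_P<f_Q$ on $J\setminus A$ while $\int_J (f_P-f_Q)\,d\mu=0$; since you do not use the lemma, the argument is unaffected.
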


\begin{proof}
Since $\ell(x_0)\ge 1$, the left endpoint belongs to $A$, so the interval assumption gives $A=[x_0,r]\cap J$ for some $r\in J\cup\{\sup J\}$. Hence $\ell\ge 1$ on $A$ and $\ell\le 1$ on $J\setminus A$, which means $f_P-f_Q$ has at most one sign change on $J$, from $+$ to $-$. The argument from~\textup{(iii)}\,$\Rightarrow$\,\textup{(i)} of Theorem~\ref{thm:unimodal-criterion} (using only the single sign change of $f_P-f_Q$, not the unimodality of $\ell$) gives $P\st Q$.

For the $\hr$ direction, set $T(x):=\bar F_P(x)/\bar F_Q(x)$ on $\{x\in J:\bar F_Q(x)>0\}$. On $A$ we have $\ell\ge 1$, so $f_P\ge f_Q$, and combining with $\bar F_P\le \bar F_Q$, established in the first part, gives $h_P\ge h_Q$. On $J\setminus A$, $\ell$ is nonincreasing and $\ell\le 1$. The bound $T(x)\le \ell(x)$ on $J\setminus A$ follows from the same tail-comparison proof of \textup{(i)}\,$\Rightarrow$\,\textup{(ii)} in Theorem~\ref{thm:unimodal-criterion}\textup{(i)}\,$\Rightarrow$\,\textup{(ii)}: for $x\in J\setminus A$ and $y\ge x$, monotonicity of $\ell$ on $J\setminus A$ gives $\ell(y)\le\ell(x)$, so $\bar F_P(x)\le \ell(x)\bar F_Q(x)$. That is, $T(x)\le \ell(x)$. The discrete and continuous monotonicity arguments of that proof then yield $T$ nonincreasing on $J\setminus A$. Hence, $h_P\ge h_Q$ holds also on $J\setminus A$.
\end{proof}

\begin{example}
\label{ex:ylp-comparison}
\begin{figure}[ht]
\centering
\includegraphics[width=\linewidth]{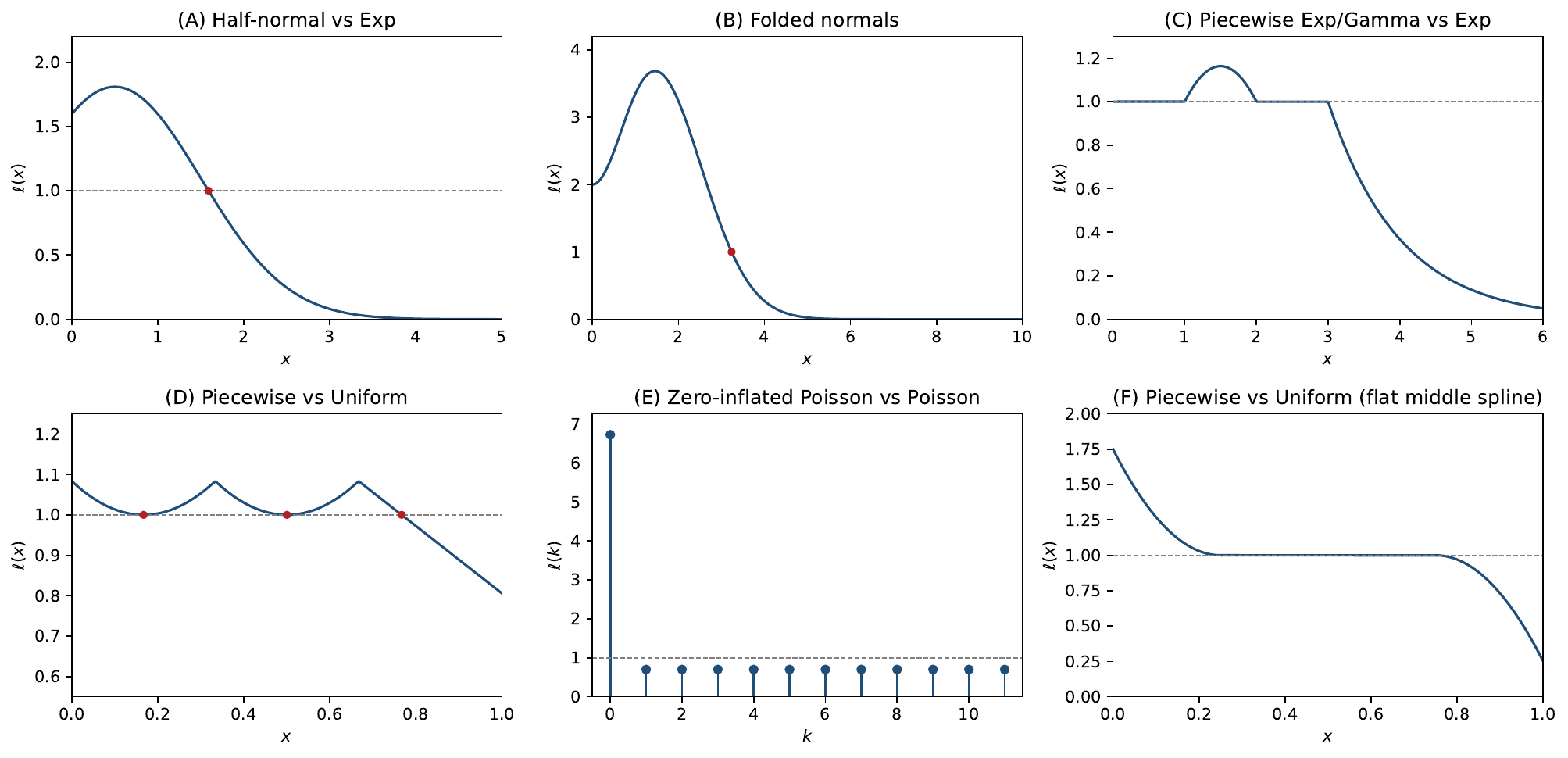}
\caption{Likelihood-ratio shapes from Example~\ref{ex:ylp-comparison}.}
\label{fig:ylp-shapes}
\end{figure}
Theorems~2.1 and~2.3 of \cite{YaoLouPan2023} assume, for nonnegative random variables, that \(\log \ell\) has at most two roots and is negative beyond the largest root; by Remark~2.5 in~\cite{YaoLouPan2023}, the same endpoint reduction holds on a finite support interval \([0,a)\). For brevity, we refer to these original hypotheses as the
Yao--Lou--Pan (YLP) condition. Figure~\ref{fig:ylp-shapes} exhibits six representative situations. 
\begin{enumerate}[label=\textup{(\Alph*)}]
\item \emph{\(\lc\) and YLP hold, but \(\lr\) does not.}
For \(P=\HN(1)\) (half-normal with scale~\(1\)) and \(Q=\Exp(\tfrac12)\) (exponential with rate~\(\tfrac12\)). Easy calculation gives  $\log\ell(x)=\log\!\sqrt{\frac{8}{\pi}}-\frac{x^2}{2}+\frac{x}{2},$ which is concave with a unique interior maximum. Thus \(P\lc Q\) and the original YLP hypothesis holds, but \(P\not\lr Q\).

\item \emph{YLP-type case.}
The folded-normal pair \(|Z_i|\) with \(Z_i\sim N(\mu_i,\sigma_i^2)\), \(i=1,2\), is the illustrative  example of \cite{YaoLouPan2023} (Remark~2.2, Example~I): they invoke Theorem~1 of \cite{WangWang2011} for the \(\st\) part and Proposition~3 of the same reference for the behaviour of the sign of \(\ell'\) on \((0,\infty)\). Together, these place the comparison outside relative log-concavity but inside their two-roots framework. Proposition~\ref{prop:ylp-kernel} reaches the same conclusion via the sign pattern of \(\ell-1\), and so applies directly here.

\item \emph{YLP fails because \(\log\ell\) vanishes on an interval. Also \(\lr\) does not hold.}
Take \(Q=\Exp(1)\), \(0<a_1<a_2<a_3\), \(\mu>1\), and
\[
f_P(x)=
\begin{cases}
\exp(-x), & 0\le x\le a_1,\\[1mm]
\bigl[1+c\,(x-a_1)(a_2-x)\bigr]\exp(-x), & a_1\le x\le a_2,\\[1mm]
\exp(-x), & a_2\le x\le a_3,\\[1mm]
\exp((\mu-1)a_3)\exp(-\mu x), & x\ge a_3,
\end{cases}
\]
with \(c>0\) determined by \(\int f_P=1\). Then \(\ell\equiv 1\) on \([0,a_1]\cup[a_2,a_3]\), \(\ell>1\) on \((a_1,a_2)\), and \(\ell<1\) on \((a_3,\infty)\). The likelihood-ratio order does \emph{not} hold here, since \(\ell\) is not monotone and YLP fails because \(\log\ell\) vanishes on an interval. However, Proposition~\ref{prop:ylp-kernel} still applies. 
\item \emph{YLP fails because \(\log\ell\) has more than two zeros.}
Take \(Q=U[0,1]\) and let \(P\) have density
\[
f_P(x)=
\begin{cases}
1+\varepsilon(x-\tfrac16)^2, & 0\le x\le \tfrac13,\\[1mm]
1+\varepsilon(x-\tfrac12)^2, & \tfrac13\le x\le \tfrac23,\\[1mm]
1+\tfrac{\varepsilon}{36}-\tfrac{5\varepsilon}{18}(x-\tfrac23), & \tfrac23\le x\le 1,
\end{cases}
\qquad \varepsilon=3.
\]
Then \(\ell=f_P\) touches the level \(1\) at \(x=\tfrac16\) and \(x=\tfrac12\), and crosses it at \(x=\tfrac{23}{30}\). Thus \(l=\log\ell\) has three zeros, so the YLP root-count fails, while Proposition~\ref{prop:ylp-kernel} still applies because \(\ell-1\) has sign pattern \(+,-\). Again \(P\not\lr Q\).

\item \emph{Discrete case, YLP does not apply.}
Fix \(0<\pi<1\) and \(\lambda>0\). Let \(P\) be the zero-inflated Poisson law with $P(\{0\})=\pi+(1-\pi)e^{-\lambda}$, and  $P(\{k\}) \propto (1-\pi)e^{-\lambda}{\lambda^k}$ for $k\ge 1$.  Let \(Q=\Poi(\lambda)\) denote the Poisson law with mean \(\lambda\). Then
\[
\ell(0)=1-\pi+\pi e^\lambda>1,\qquad
\ell(k)=1-\pi<1,\quad k\ge 1.
\]
Hence \(\{\ell\ge 1\}=\{0\}\), so Proposition~\ref{prop:ylp-one-sided-score} applies directly. 

\item \emph{YLP root-count fails but \(\lr\) holds.}
Take \(Q=U[0,1]\) and let \(P\) have density
\[
f_P(x)=
\begin{cases}
1+\varepsilon(1-4x)^2, & 0\le x\le \tfrac14,\\[1mm]
1, & \tfrac14\le x\le \tfrac34,\\[1mm]
1-\varepsilon(4x-3)^2, & \tfrac34\le x\le 1,
\end{cases}
\qquad 0<\varepsilon<1.
\]
Observe that \(\ell=f_P\) is nonincreasing. Thus, \(P\lr Q\), but \(l=\log\ell\) vanishes on the whole interval \([\tfrac14,\tfrac34]\). The hypothesis of  at most two roots therefore fails even though $\ell$ is unimodal, whereas Proposition~\ref{prop:ylp-kernel} still applies because the sign pattern of \(\ell-1\) is \(+,-\).
\end{enumerate}
\end{example}

\section{Conclusion}
\label{sec:discussion}

We have shown that the left-endpoint reduction for the hazard-rate and usual stochastic orders persists well beyond relative log-concavity of the likelihood ratio. Besides the unimodal case, it remains valid under a sign-pattern condition and under a direct superlevel-set criterion that also covers discontinuous likelihood ratios.

\printbibliography

\end{document}